\newtheorem{thm}{Theorem}[section]
\newtheorem{lem}[thm]{Lemma}
\newtheorem{prop}[thm]{Proposition}
\theoremstyle{definition}
\theoremstyle{remark}
\newtheorem{rem}[thm]{Remark}
\numberwithin{equation}{section}
\newcommand{\R}{\mathbb{R}}
\crefname{hypothesis}{Hypothesis}{Hypotheses}
\title{Initial Value Problem for one-dimensional rotating shallow water equations}
 \author{
 Nabil Bedjaoui\thanks{LAMFA UMR CNRS 7352, Université de Picardie Jules Verne
 (\email{nabil.bedjaoui@u-picardie.fr}).}
 \and 
 Vivien Desveaux\thanks{LAMFA UMR CNRS 7352, Université de Picardie Jules Verne
 (\email{vivien.desveaux@u-picardie.fr}).}
 \and
 Olivier Goubet\thanks{Laboratoire Paul Painlevé CNRS UMR 8524, et équipe projet INRIA PARADYSE, Université de Lille
 (\email{olivier.goubet@univ-lille.fr}).}
  \and
  Alice Masset\thanks{LAMFA UMR CNRS 7352, Université de Picardie Jules Verne
  (\email{alice.masset@u-picardie.fr}).}
  }
\begin{document}

\maketitle

\begin{keywords}
	hyperbolic system, Coriolis force, diffusive regularization, symmetrization.
\end{keywords}

\begin{AMS}
	Primary 35L45;  Secondary  35M11, 76D03.
\end{AMS}

\begin{abstract}
In this article we address some issues related to the initial value problems
for a rotating shallow water hyperbolic system of equations and the diffusive regularization of this system. For initial data close to the solution at rest, we establish the local existence and the uniqueness of a solution to the hyperbolic system, as well as the global existence of a solution to the regularized system. In order to prove this, we use suitable variables that symmetrize the system.
\end{abstract}


\maketitle

\section{Introduction}

The present work is devoted to the study of the initial value problem for one-dimensional shallow water equations with Coriolis force,
 also called rotating shallow water (RSW) equations. The RSW equations are used in oceanography and meteorology to model geophysical motions at large scale where the Coriolis force due to the Earth rotation plays a fundamental role.  These equations are given in conservative form by
\begin{equation}\label{eq:RSW2D}
\begin{aligned}
 h_t + (hu)_x + (hv)_y & = 0,\\
 (hu)_t + \left(hu^2 + \frac{gh^2}{2}\right)_x  + (huv)_y &  = fhv , \\
 (hv)_t + (huv)_x + \left( hv^2 + \frac{gh^2}{2} \right)_y & = -fhu.
\end{aligned}
\end{equation}
\noindent  In this article we are interested in the one-directional reduction
of these equations that reads,
\begin{equation}\begin{aligned}\label{eq:RSW}
h_t + (hu)_x & =0, \\
(hu)_t + \left( hu^2 + \frac{gh^2}{2} \right)_x & = fhv, \\
(hv)_t + (huv)_x & = -fhu,
\end{aligned}\end{equation}

\noindent where $h$ denotes the fluid height, $u$ the horizontal velocity, and $v$ the transverse one.
The fluid at rest solution of these equations reads $h=\bar h$ and $u=v=0$.
The Coriolis force $f$ which depends on $x$ is such that $f, f'$ and $f''$ are in $L^\infty(\R)$.
The gravity $g$ is constant. These equations are supplemented with initial data $h_0,u_0,v_0$.

A large literature is devoted to the numerical aspects of shallow water equations. We can mention the important work \cite{Hydrostatic} that introduces well-balanced schemes based on hydrostatic reconstruction for shallow water equations with topography. For works specifically dedicated to RSW equations, we refer to \cite{FirstRSW} which describes numerical schemes for the 1D system,  or more recently to \cite{Chertock} for the 2D system. Furthermore, model derivation and physical aspects can be found in \cite{LivreZeitlin}. However, not so much has been done about the theoretical study of RSW equation. Then, we address in the present article the initial value problem.

In the following we assume that $h_0-\bar{h},u_0,v_0$ belong to $(H^2(\R))^3$ and that $h_0(x)\geq \underline h>0$ where $\underline h$ is a constant. Our main result reads as follows

\begin{thm} \label{thm:E&U RSW}
Assume $h_0-\bar{h},u_0,v_0$ are in $H^2(\R)$ and $h_0 \geq \underline{h} > 0$.
There exists a unique classical solution $(h-\bar h,u,v)^T$ in $C([0,T_0]; H^2(\R)^3 )$ of the system \eqref{eq:RSW},
where $T_0$ depends on the initial data as $T_0 \simeq \Vert (h_0-\bar h,u_0,v_0)^T \Vert_{H^2(\R)^3}^{-1}$; moreover $h$ remains positive.
\end{thm}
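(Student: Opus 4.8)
The plan is to recast \eqref{eq:RSW} as a symmetric hyperbolic system and then run the energy method. On the region $\{h>0\}$, set $\phi = 2\sqrt{gh}$ (so the rest state becomes $\bar\phi = 2\sqrt{g\bar h}$), and let $W = (\phi,u,v)^T$, $\bar W := (\bar\phi,0,0)^T$. Using the mass equation to remove the time derivatives of $h$ from the other two equations, one checks that \eqref{eq:RSW} is equivalent, for classical solutions and as long as $h$ stays positive, to
\begin{equation*}
W_t + A(W)\,W_x = S(W),\qquad A(W)=\begin{pmatrix} u & \tfrac{\phi}{2} & 0\\ \tfrac{\phi}{2} & u & 0\\ 0 & 0 & u\end{pmatrix},\qquad S(W)=\begin{pmatrix}0\\ fv\\ -fu\end{pmatrix},
\end{equation*}
where $A(W)$ is symmetric. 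Since $H^2(\R)$ is a Banach algebra stable under composition with smooth functions, and $h_0$ takes values in the compact interval $[\underline h,\,\bar h+\|h_0-\bar h\|_{L^\infty}]$ on which $h\mapsto 2\sqrt{gh}$ is a smooth diffeomorphism, the change of unknown is a bi-Lipschitz correspondence between the relevant neighbourhoods of $\bar h$ in $\bar h+H^2(\R)$ and of $\bar\phi$ in $\bar\phi+H^2(\R)$; in particular $W_0-\bar W\in H^2(\R)^3$ with norm comparable to $\|(h_0-\bar h,u_0,v_0)^T\|_{H^2(\R)^3}$, and it suffices to build $W-\bar W\in C([0,T_0];H^2(\R)^3)$. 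As $H^2(\R)\hookrightarrow C^1(\R)$, such a $W$ is automatically a classical solution, and $h=\phi^2/(4g)$ recovers the original unknown.

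For the existence I would construct approximate solutions, either by the Picard iteration $\partial_t W^{n+1}+A(W^n)\partial_x W^{n+1}=S(W^n)$, solving each \emph{linear} symmetric hyperbolic problem by the classical energy/duality argument, or by the vanishing--viscosity regularization $\partial_t W^\varepsilon+A(W^\varepsilon)\partial_x W^\varepsilon=S(W^\varepsilon)+\varepsilon\,\partial_x^2 W^\varepsilon$, solved locally in $C([0,T_\varepsilon];H^2)$ by Duhamel and a fixed point (the first-order part being a lower-order perturbation of the heat semigroup). The heart of the argument is a \emph{uniform} a priori estimate for $\widetilde W:=W-\bar W$. Differentiating the equation $\partial_x^k$ times for $k=0,1,2$, pairing with $\partial_x^k\widetilde W$ in $L^2(\R)$, and integrating by parts the principal part using the symmetry of $A$ together with $\partial_x A(\bar W)=0$, one obtains after Moser-type commutator estimates
\begin{equation*}
\frac{d}{dt}\,\|\widetilde W(t)\|_{H^2(\R)^3}^2\ \le\ C\!\left(\|\widetilde W(t)\|_{H^2(\R)^3}\right)\,\|\widetilde W(t)\|_{H^2(\R)^3}^2,
\end{equation*}
with $C(\cdot)$ continuous nondecreasing depending only on $g,\bar h,\underline h$ and $\|f\|_{W^{2,\infty}}$; note that the Coriolis source drops out of the $L^2$ identity (it is skew-symmetric there) and enters the $k=1,2$ estimates only through the harmless terms in $f'$ and $f''$. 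A nonlinear Gronwall (comparison) argument then provides a time $T_0\simeq\|\widetilde W_0\|_{H^2(\R)^3}^{-1}$ on which $\|\widetilde W(t)\|_{H^2(\R)^3}\le 2\|\widetilde W_0\|_{H^2(\R)^3}$, uniformly in $n$ (resp.\ in $\varepsilon$). Positivity of $h$ on $[0,T_0]$ is then automatic: along the characteristics $\dot x=u$ the first equation reads $\dot\phi=-\tfrac{\phi}{2}u_x$ with $\|u_x\|_{L^\infty}$ controlled on $[0,T_0]$, so $\phi$ (hence $h$) cannot reach $0$, the product of $T_0$ with the growth rate being $O(1)$.

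It remains to pass to the limit and to conclude. The uniform $H^2$ bound together with the bound on $\partial_t W$ in $L^2$ read off from the equation give, by Aubin--Lions (or directly from the $L^2$ contraction estimate for the iteration), strong convergence in $C([0,T_0];H^s_{\mathrm{loc}})$ for every $s<2$, which is enough to pass to the limit in the equation, while weak-$*$ lower semicontinuity keeps $\widetilde W\in L^\infty([0,T_0];H^2(\R)^3)$. The same $L^2$ energy estimate gives uniqueness: for two solutions $W_1,W_2$ with the same data, $D:=W_1-W_2$ solves $\partial_t D+A(W_1)\partial_x D=(A(W_2)-A(W_1))\partial_x W_2+S(W_1)-S(W_2)$, and the symmetric-system estimate yields $\tfrac{d}{dt}\|D\|_{L^2}^2\le C(\|W_1\|_{H^2},\|W_2\|_{H^2})\|D\|_{L^2}^2$, whence $D\equiv0$ by Gronwall. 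Finally, $\widetilde W\in C([0,T_0];H^2(\R)^3)$ in the strong sense is obtained in the usual way: weak continuity is immediate, and it is upgraded to strong continuity because $t\mapsto\|\widetilde W(t)\|_{H^2(\R)^3}$ is continuous (the a priori estimate being valid forward and backward in time, equivalently a Bona--Smith mollification argument). Undoing the change of variables completes the proof. I expect the main obstacle to be the uniform $H^2$ a priori estimate with the correct dependence of $T_0$ on the data — the bookkeeping of the Moser and commutator terms, and the fact that the Coriolis coupling is benign — together with the simultaneous control of $\inf_x h$.
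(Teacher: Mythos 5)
Your proposal follows essentially the same route as the paper: the same symmetrizing change of unknown $\lambda=2\sqrt{gh}$, a vanishing-viscosity construction (the paper's choice; you also offer Picard iteration on linearized symmetric systems as an equivalent alternative), uniform $H^2$ energy estimates exploiting the symmetry of the quasilinear matrix and the benign Coriolis coupling, an $L^2$ contraction estimate for both the passage to the limit and uniqueness, and the final upgrade to strong continuity in $H^2$. The only substantive divergence is the positivity of $h$: you integrate $\dot\phi=-\frac{\phi}{2}u_x$ along characteristics using the $L^\infty$ control of $u_x$ on $[0,T_0]$, whereas the paper derives an $H^1$ (hence $L^\infty$) bound on $\ln h$ from the transport equation $[\ln h]_t+u[\ln h]_x+u_x=0$; both yield the required two-sided bound on $h$, and your characteristics argument is arguably the more direct of the two.
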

The proof relies on a suitable change of unknowns in the equations
and on a suitable diffusive approximation (depending on a small parameter $\varepsilon$) of the new system of equations. 

We can easily adapt this proof for RSW equations with non constant topography $z$ in $H^3(\R)$.
These equations read
\begin{equation}\begin{aligned}
h_t + (hu)_x & =0, \\
(hu)_t + \left( hu^2 + \frac{gh^2}{2} \right)_x & = fhv - g h z_x, \\
(hv)_t + (huv)_x & = -fhu.
\end{aligned}\end{equation}
We do not detail it here for the sake of conciseness.

The second main result of this article states that the solution
of the regularized diffusive approximation are global in time
if the initial data is close enough to the fluid at rest solution.

The paper is organized as follows. Section \ref{proofofthemainthm}
is devoted to the proof of the main theorem.
We first introduce the change of variable and the diffusive approximation.
We then prove the local well-posedeness for this regularized system,
and then pass to the limit when $\varepsilon$ goes to $0$.
In the following section, we prove for $\varepsilon>0$ fixed
a global existence result. The last section validates the change of variable
since the condition  $h>0$ holds true.

We complete this introduction with some notations.
If $H^m(\R)$ is the classical Sobolev space whose scalar elements
and their first $m$ derivatives are in $L^2$, then $\mathbb{H}^m(\R)=H^m(\R)^3$.
For $m=0$ we write respectively $L^2(\R)$ and $\mathbb{L}^2(\R)$. We also denote $L^\infty(\R)^3$ by $\mathbb{L}^\infty(\R)$.

\section{Proof of Theorem \ref{thm:E&U RSW}}\label{proofofthemainthm}

\subsection{Introducing new variables and diffusive approximation}

 As long as $h>0$, instead of the unknowns $(h,hu,hv)$ in \eqref{eq:RSW}, we rather use the unknowns $\lambda := 2\sqrt{g h}, u, v$.
 We set $\bar\lambda=2\sqrt{ g  \bar{h} }$.
Then system \eqref{eq:RSW} yields
\begin{equation}\label{eq:RSW SYM}
(V-E)_t+ S(V)(V-E)_x+  F \times ( V - E) = 0,
\end{equation}

\noindent where $$V=\begin{pmatrix}\lambda \\ u \\ v \end{pmatrix}, \;
F = \begin{pmatrix} f \\ 0 \\ 0 \end{pmatrix},
\; E=\begin{pmatrix} \bar{\lambda} \\ 0 \\ 0 \end{pmatrix},$$

$$ {\rm and}  \; S(V)=S=\begin{pmatrix}
u & \frac{\lambda}{2} & 0 \\
\frac{\lambda}{2} & u & 0 \\
0 & 0 & u \end{pmatrix}. $$

This change of variable turns the nonlinear part of the equations into
a symmetrizable non-linearity, that is more suitable for the computations with an hyperbolic system
(see \cite{GRT2} and the references therein).
The price to pay is to check that $h$ remains positive along the process.
This will be addressed in Section \ref{plus}.
There are other possible methods to symmetrize the system. For example we can multiply the system \eqref{eq:RSW} by the entropy derivative,
see \cite{GRT2}.

We now introduce a regularized version of \eqref{eq:RSW SYM} adding a diffusive term in the right hand side
as follows
\begin{equation}\label{eq:RSW SYM REG}
(V-E)_t+ S(V)(V-E)_x+  F \times ( V - E) = \varepsilon (V-E)_{xx}.
\end{equation}
with $\varepsilon > 0$. We now address the initial value problem for this regularized system.

\subsection{Solving the regularized equation}

In this section, we prove existence and uniqueness of a solution to \eqref{eq:RSW SYM REG}.

\begin{prop} \label{thm:E&U RSW SYM REG}
	For $\varepsilon>0$ fixed, there exists $T_0\eqsim  C ||V(0)-E||^{-2}_{\mathbb{H}^2}$ such that
	equation \eqref{eq:RSW SYM REG} has a unique solution $V$ in $C([0,T_0]; E+\mathbb{H}^2(\R))$.
\end{prop}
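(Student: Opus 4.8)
The plan is to treat \eqref{eq:RSW SYM REG} as a semilinear parabolic system and solve it by Duhamel's formula together with a contraction mapping argument, the parabolic smoothing of the heat semigroup absorbing the one spatial derivative lost in the first-order term. Writing $W:=V-E$, equation \eqref{eq:RSW SYM REG} becomes
\begin{equation*}
W_t-\varepsilon W_{xx}=N(W):=-S(W+E)\,W_x-F\times W,\qquad W(0)=V(0)-E .
\end{equation*}
Since $S$ is affine in its argument and $f,f'\in L^\infty(\R)$, the nonlinearity $N$ maps $\mathbb{H}^2(\R)$ into $\mathbb{H}^1(\R)$, losing exactly one derivative. Accordingly I would look for a mild solution, i.e. a fixed point in $C([0,T_0];\mathbb{H}^2(\R))$ of
\begin{equation*}
\Phi(W)(t):=e^{\varepsilon t\partial_x^2}W(0)+\int_0^t e^{\varepsilon(t-s)\partial_x^2}\,N(W(s))\,ds .
\end{equation*}

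First I would record the two heat-semigroup estimates that drive the argument: $e^{\varepsilon t\partial_x^2}$ is a contraction on each $H^k(\R)$, and it gains one derivative at the price of an integrable singularity,
\begin{equation*}
\big\|e^{\varepsilon t\partial_x^2}g\big\|_{\mathbb{H}^2}\le \big(1+(2e\varepsilon t)^{-1/2}\big)\,\|g\|_{\mathbb{H}^1},\qquad t>0,
\end{equation*}
which follows on the Fourier side from $\sup_{\xi\in\R}(1+\xi^2)e^{-2\varepsilon t\xi^2}\le 1+(2e\varepsilon t)^{-1}$. Next, using that $H^2(\R)$ is a Banach algebra continuously embedded in $L^\infty(\R)$, I would prove the nonlinear bounds
\begin{equation*}
\|N(W)\|_{\mathbb{H}^1}\le C\big(1+\|W\|_{\mathbb{H}^2}\big)\|W\|_{\mathbb{H}^2},
\end{equation*}
\begin{equation*}
\|N(W_1)-N(W_2)\|_{\mathbb{H}^1}\le C\big(1+\|W_1\|_{\mathbb{H}^2}+\|W_2\|_{\mathbb{H}^2}\big)\|W_1-W_2\|_{\mathbb{H}^2},
\end{equation*}
with $C$ depending only on $g$, $\bar h$, $\|f\|_{L^\infty}$ and $\|f'\|_{L^\infty}$: the product $S(W+E)W_x$ is treated by splitting $S(W+E)=S(E)+(S(W+E)-S(E))$ and applying the algebra property to the genuinely quadratic remainder, while $F\times W$ reduces to multiplication by $f$, bounded on $H^1(\R)$.

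Then I would set $R:=2\|W(0)\|_{\mathbb{H}^2}$ and combine the above with $\int_0^t\big(1+(\varepsilon(t-s))^{-1/2}\big)\,ds=t+2\sqrt{t/\varepsilon}$ to show that, provided $T_0\le c\,\varepsilon/(1+R)^2$ for a small absolute constant $c$, the map $\Phi$ carries the closed ball of radius $R$ in $C([0,T_0];\mathbb{H}^2(\R))$ into itself and is a strict contraction there. Its unique fixed point is then the desired solution $V=E+W\in C([0,T_0];E+\mathbb{H}^2(\R))$ of \eqref{eq:RSW SYM REG}, and the resulting lower bound on $T_0$ is precisely the announced $T_0\simeq C\|V(0)-E\|_{\mathbb{H}^2}^{-2}$ (here with $C$ proportional to $\varepsilon$). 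Continuity in time, $W\in C([0,T_0];\mathbb{H}^2(\R))$, is built into the iteration through strong continuity of $(e^{\varepsilon t\partial_x^2})_{t\ge0}$ on $\mathbb{H}^2(\R)$; and uniqueness holds in the full class $C([0,T_0];E+\mathbb{H}^2(\R))$, not merely among mild solutions, since any such solution has nonlinearity in $L^1(0,T_0;\mathbb{H}^1(\R))$ and hence satisfies the Duhamel identity, so the difference $\phi(t)$ of two solutions obeys the singular Gronwall inequality $\phi(t)\le C\int_0^t(\varepsilon(t-s))^{-1/2}\phi(s)\,ds$, which forces $\phi\equiv 0$.

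The only genuine obstacle I anticipate is the quasilinear structure of the principal nonlinear term $S(V)(V-E)_x$: its coefficient depends on the unknown and one spatial derivative is lost. The device above — keeping $\varepsilon\partial_x^2$ as the entire linear part, absorbing the whole first-order term into the source $N$, and then playing the $(\varepsilon t)^{-1/2}$ smoothing of the heat kernel against the algebra structure of $H^2(\R)$ — resolves it cleanly, the only visible cost being the factor $\varepsilon$ in the lifespan. The remaining ingredients (the semigroup and nonlinear estimates, and the verification that the constants appearing in the time integrals are not merely finite but small as $T_0\to0$, which is what closes the contraction) are routine one-dimensional Sobolev calculus.
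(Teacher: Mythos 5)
Your contraction-mapping construction is essentially the first half of the paper's proof (same Duhamel formulation, same $\mathcal{L}(\mathbb{L}^2,\mathbb{H}^1)$ smoothing estimate for the heat semigroup, same use of the algebra property of $H^1$), but it stops short of the actual content of the proposition. The lifespan you obtain from the fixed point alone is $T\lesssim c\,\varepsilon/(1+R)^2$, and you acknowledge that your constant $C$ is proportional to $\varepsilon$. That is not the statement being proved: the time $T_0\simeq C\|V(0)-E\|_{\mathbb{H}^2}^{-2}$ must be \emph{independent of} $\varepsilon$, because the whole point of this proposition in the paper is to produce solutions of the regularized problems on a \emph{common} interval $[0,T_0]$ on which one can then let $\varepsilon\to 0$. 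With a lifespan that shrinks like $\varepsilon$, the subsequent compactness/limit argument collapses, so dismissing the factor $\varepsilon$ as ``the only visible cost'' is exactly where the gap lies.

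The missing idea is the second half of the paper's proof: an $\varepsilon$-uniform \emph{a priori} $\mathbb{H}^2$ energy estimate, obtained by testing \eqref{eq:RSW SYM REG} with $V-E$ and with $V_{4x}$ and exploiting the symmetry of $S(V)$ (so that, after integration by parts, $-2\int_\R (V-E)\cdot S\,(V-E)_x\,dx=\int_\R (V-E)\cdot S_x\,(V-E)\,dx$ is controlled by $\|V_x\|_{\mathbb{L}^\infty}\|V-E\|_{\mathbb{L}^2}^2$, and similarly at the level of second derivatives), while the diffusive terms $2\varepsilon\int|V_x|^2$ and $2\varepsilon\int|V_{xxx}|^2$ have a favorable sign and are simply dropped. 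Combined with a stopping-time/continuation argument (re-running your contraction from time $T_\varepsilon$, $2T_\varepsilon$, \dots, which is legitimate precisely because the energy estimate prevents the $\mathbb{H}^2$ norm from more than doubling before an $\varepsilon$-independent time), this yields the announced $T_0$ depending only on $\|V(0)-E\|_{\mathbb{H}^2}$ and on $f$. Your contraction step and your uniqueness argument (singular Gronwall in the mild formulation; the paper instead uses an $\mathbb{L}^2$ energy estimate on the difference) are fine as far as they go, but without the symmetric-structure energy estimate the proposition is not established in the form in which it is used.
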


\begin{proof}

We first solve the equation applying a fixed point theorem
on a short interval of time $T_\varepsilon(0) \eqsim C\varepsilon \min(1, ||V(0)-E||_{\mathbb{H}^2}^{-2}).$
We then iterate this process to obtain a solution that is defined
on a maximal interval of time $[0,T_0)$ with $T_0> \frac{C}{||V(0)-E||_{\mathbb{H}^2}^2}$.
For later use, we define $$N(t)=\sqrt{ ||V(t)-E||^2_{\mathbb{L}^2}+||V_{xx}(t)||^2_{\mathbb{L}^2}},$$
\noindent that is a norm on $E+\mathbb{H}^2(\R)$.

\subsubsection{A fixed point theorem}

Consider first the linear evolution equation

\begin{equation}\label{mer1}
W_t =\varepsilon W_{xx},
\end{equation}

\noindent supplemented with initial data in $\mathbb{L}^2(\R)$.
Set $W(t)=\mathcal{W}(t)W(0)$ for this heat flow.
We recall the following standard result.

\begin{lem}\label{lem:operator}
	The semigroup $\mathcal{W}(t)$ satisfies $||\mathcal{W}(t)||_{\mathcal{L}(\mathbb{L}^2)}\leq 1$
	and $$||\mathcal{W}(t)||_{\mathcal{L}(\mathbb{L}^2,\mathbb{H}^1)}\leq \sqrt{1+\frac{1}{2e\varepsilon t}}.$$
\end{lem}

\begin{proof}  For a $L^2$ scalar function $\varphi$ we have
$$ \int_\R (1+|\xi|^2)\exp(-2\varepsilon t\xi^2)|\hat\varphi(\xi)|^2d\xi\leq \int_\R \left( 1+\frac{1}{2e\varepsilon t} \right)|\hat\varphi(\xi)|^2d\xi.$$
\noindent Then the result follows.
\end{proof}

We now seek a mild solution to \eqref{eq:RSW SYM REG}.
We perform a fixed point argument in $C([0,T]; \mathbb{H}^2(\R))$ to the operator $\mathcal{T}$
that maps $V-E$  to
\begin{equation}\label{f1}
\mathcal{W}(t)(V(0)-E)- \int_0^t \mathcal{W}(t-s)  \left( S(V)(V-E)_x + F \times (V-E) \right)ds.
\end{equation}

Using Lemma \ref{lem:operator}, we have
\begin{multline}\label{mer5}
 ||\mathcal{T}(V-E)(t)||_{\mathbb{H}^2 }\leq ||(V-E)(0)||_{\mathbb{H}^2 }+\\
\int_0^t \sqrt{1+\frac{1}{2e\varepsilon (t-s)}}\left( ||S(V)(V-E)_x||_{\mathbb{H}^1} + || F \times (V-E)  ||_{\mathbb{H}^1} \right)ds.
\end{multline}

\noindent  Since $H^1(\R)$ is a Banach algebra we have
\begin{equation*}\begin{split}
||S(V)(V-E)_x||_{\mathbb{H}^1}\leq C \left( ||V-E||^2_{\mathbb{H}^2}+||V-E||_{\mathbb{H}^2}\right).
\end{split}\end{equation*}

\noindent Besides we have $$||F \times (V-E)||_{  \mathbb{H}^1 } \leq c(|| f ||_{L^\infty }, ||f' ||_{L^\infty } ) || V - E ||_{ \mathbb{H}^1 }.$$
Therefore for $||V(0)-E||_{\mathbb{H}^2 }\leq \frac R 2$ we have that
\begin{multline}\label{mer6}
||\mathcal{T}(V-E)(t)||_{\mathbb{H}^2 }\leq \frac R 2
+ C(|| f ||_{L^\infty}, || f' ||_{L^\infty} ) \\
\times \left(T+\frac{\sqrt T}{\sqrt \varepsilon}\right)
 \sup_{[0,T]} \left( ||V-E||^2_{\mathbb{H}^2 } + || V-E ||_{\mathbb{H}^2} \right).
\end{multline}

\noindent Therefore for $T$ small enough depending on $R$, $f$, $f'$ and $\varepsilon$,
$\mathcal{T}$ maps the ball of radius $R$ in $C([0,T]; \mathbb{H}^2)$ into itself.
Since the map $(V,W)\mapsto S(V)W$ is bilinear, to prove
that $\mathcal{T}$ is a contraction if $T$ is small enough
is similar and then omitted for the sake of conciseness.
Thus we can apply a fix point theorem.

\subsubsection{{\it A priori} estimate}

We prove {\it a priori} bound for the solution of Proposition \ref{thm:E&U RSW SYM REG}.
	
\noindent Consider the scalar product of \eqref{eq:RSW SYM REG} with $V-E$. This leads to,
with $|V|$ being the $\R^3$ euclidean norm, and $V.W$ the corresponding scalar product
\begin{equation}\label{symog3}
\frac{d}{dt} \int_{\R} |V-E|^2dx +2\varepsilon \int_\R |V_x|^2 dx =- 2\int_\R (V-E).S (V-E)_x dx.
\end{equation}

\noindent Since $S$ is self-adjoint then
\begin{equation}\label{symog4}
- 2\int_\R (V-E).S (V-E)_x dx=\int_\R (V-E).S_x (V-E)dx.
\end{equation}

\noindent Let us observe that

$$ S_x= \begin{pmatrix}
u_x & \frac{\lambda_x}{2} & 0 \\
\frac{\lambda_x}{2} & u_x & 0 \\
0 & 0 & u_x
\end{pmatrix}. $$

\noindent Then, setting $\displaystyle |||S_x ||| = \sup_{x\in \R} || S_x ||_{\mathcal{L}(\R^3)}$ we have

\begin{equation}\label{sbound}
|||S_x|||\leq || V_x ||_{\mathbb{L}^\infty(\R)}.
\end{equation}

\noindent This comes from the fact that
\begin{equation}\label{cor0}
||S_x||_{\mathcal{L}(\R^3)}=\max \left(|u_x|, \Big\vert u_x+ \frac{\lambda_x}{2} \Big\vert,  \Big\vert u_x- \frac{\lambda_x}{2} \Big\vert \right).
\end{equation}

\noindent Then we have
\begin{equation}\label{cor3}
\frac{d}{dt} \int_{\R} |V-E|^2dx +2\varepsilon \int_\R |V_x|^2 dx \leq |||S_x||| \int_\R |V-E|^2 dx.
\end{equation}
Therefore dropping a positive term
\begin{equation}\label{symog5}
\frac{d}{dt} \int_{  \R} |V-E|^2dx \leq \left( \sup_{[0,T]} ||V_x||_{\mathbb{L}^\infty(\R)} \right)  \int_\R |V-E|^2 dx.
\end{equation}

Introduce now the stopping time
$$\tau=\inf \{ t>0 ; ||V(t)-E||_{ \mathbb{H}^2 } > 2 ||V(0)-E||_{\mathbb{H}^2 }=2M_0\}.$$

\noindent For $T\leq \tau$ we set $\sigma(T,V_x)=\sup_{[0,T]} ||V_x||_{\mathbb{L}^\infty(\R)}$.
Then we have, integrating \eqref{symog5}, that for $t\leq T$

\begin{equation}\label{f2}
||V(t)-E||^2_{\mathbb{L}^2}\leq ||V(0)-E||^2_{\mathbb{ L}^2 }  + \sigma(T,V_x) T \sup_{s\leq t}||V(s)-E||^2_{ \mathbb{L}^2 }.
\end{equation}

  Consider now the scalar product of \eqref{eq:RSW SYM REG} with $V_{4x}$.
Integration by parts yields
\begin{multline}\label{symog8}
\frac{d}{dt} \int_{ \R} |V_{xx}|^2dx +2\varepsilon \int_\R |V_{xxx}|^2 dx =-2\int_\R V_{4x}.S V_x dx \\
-2 \int_\R V_{xx} . F_{xx} \times (V-E) dx -4 \int_\R V_{xx} . F_x \times V_x { dx}.
\end{multline}

\noindent To begin with, we handle the last two terms in the right hand side
of \eqref{symog8}. Using the following interpolation inequality
$$||V_x||^2_{ \mathbb{L}^2 }\leq||V_{xx}||_{\mathbb{L}^2}||V-E||_{\mathbb{L}^2 },$$
\noindent we have
$$ -2 \int_\R V_{xx} . F_{xx} \times (V-E) dx\leq 2||f''||_{L^\infty}||V_{xx}||_{\mathbb{L}^2 } ||V-E||_{\mathbb{L}^2}, $$

\noindent and
$$ -4 \int_\R V_{xx} . F_x \times V_x\leq 4||f'||_{L^\infty} |||V_{xx}||^\frac32_{\mathbb{L}^2} ||V-E||^\frac12_{\mathbb{L}^2}. $$

\noindent Then by Young's inequality, these last two terms are bounded by
$$ ( ||f''||_{L^\infty}+3\sqrt 3||f'||_{L^\infty})(||V-E||_{\mathbb{L}^2}^2+||V_{xx}||_{\mathbb{L}^2}^2).$$

\noindent Besides, integrating by parts the first term in the right hand side of \eqref{symog8}
\begin{equation}\label{symog9}
-2\int_\R V_{4x}.S V_x dx=2\int_\R V_{3x}.S_x V_{x} dx +2\int_\R V_{3x}.S V_{2x} dx.
\end{equation}

\noindent On the one hand, using again that $S$ is self-adjoint we have
$$ 2\int_\R V_{3x}.S V_{2x} dx = -  \int_\R V_{2x}.S_x V_{2x} dx \leq |||S_x||| \int_\R |V_{xx}|^2 dx.$$

\noindent On the other hand
$$\int_\R V_{3x}.S_x V_{x} dx= - \int_\R V_{2x}.S_x V_{2x} dx -\int_\R V_{2x}.S_{2x} V_{x} dx. $$

\noindent A mere computation leads to
$$ \int_\R -V_{2x}.S_{2x} V_{x} dx \leq  \frac 32 ||V_x||_{L^\infty(\R)} \int_\R |V_{xx}|^2 dx.$$

\noindent Gathering these inequalities we have
\begin{multline}\label{symog10}
\frac{d}{dt} \int_\mathbb{R} |V_{xx}|^2 dx \leq \left( 6 \sup_{[0,T]} ||V_x||_{L^\infty(\R)} \right) \int_\R |V_{xx}|^2 dx + \\
( ||f''||_{L^\infty}+3\sqrt 3||f'||_{L^\infty})(||V-E||_{\mathbb{L}^2}^2+||V_{xx}||_{\mathbb{L}^2}^2).
\end{multline}

\noindent Combining this inequality with \eqref{f2}, we have that for $t \leq T$ this leads to
\begin{equation}\label{f4}
N(t)^2 \leq N(0)^2+ T \left( 6 \sigma(T,V_x) + 3\sqrt 3|| f' ||_{L^\infty} + || f'' ||_{L^\infty}  \right) \sup_{s\leq t} N(s)^2.
\end{equation}

\noindent  Therefore for $t\leq T$, assuming $2T \left( 6 \sigma(T,V_x) + 3\sqrt 3|| f' ||_{L^\infty} + || f'' ||_{L^\infty}  \right)\leq 1$
we have $||V(t)-E||_{\mathbb{H}^2 }\leq 2 ||V(0)-E||_{ \mathbb{H}^2 }.$
Therefore $\tau$ such that
$24\tau ||V_x||_{\mathbb{L}^\infty(\R)} \leq 1$ and $4\tau(3\sqrt 3|| f' ||_{L^\infty} + || f'' ||_{L^\infty})\leq 1$
provides a lower bound for the maximum time of existence. This completes the proof of Proposition \ref{thm:E&U RSW SYM REG}.

\end{proof}

\subsection{Passing to the limit $\varepsilon \rightarrow 0$}

Here we denote by $V^\varepsilon$ the solution of the regularized problem.
We have proved local existence until a time $T$ independent of $\varepsilon$ for any $\varepsilon > 0$ fixed. We first prove the sequence $(V^\varepsilon)_{\varepsilon > 0}$ is a Cauchy sequence in $\mathbb{L}^2(\R)$, and then establish the $\mathbb{H}^2$-regularity  for the limit $V$ of $V^\varepsilon$.

\begin{prop}
	The sequence $V^\varepsilon-E$ converges towards $V-E$  that is
	the unique solution in $C([0,T_0];  \mathbb{H}^2(\R) )$  of \eqref{eq:RSW SYM}.
\end{prop}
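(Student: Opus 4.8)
The plan is to establish the proposition in four steps: (i) $(V^\varepsilon-E)_{\varepsilon>0}$ is a Cauchy sequence in $C([0,T_0];\mathbb{L}^2(\R))$; (ii) its limit $V-E$ lies in $L^\infty(0,T_0;\mathbb{H}^2(\R))$ and solves \eqref{eq:RSW SYM}; (iii) this solution is strongly continuous with values in $\mathbb{H}^2(\R)$; (iv) it is the unique such solution. Throughout I use the $\varepsilon$-uniform information coming from Proposition \ref{thm:E&U RSW SYM REG} and the stopping-time argument in its proof: all the $V^\varepsilon$ are defined on the common interval $[0,T_0]$ and satisfy $\|V^\varepsilon(t)-E\|_{\mathbb{H}^2}\le 2M_0$ there, uniformly in $\varepsilon$; hence $\|V^\varepsilon_x(t)\|_{\mathbb{L}^\infty}$ and $\|V^\varepsilon_{xx}(t)\|_{\mathbb{L}^2}$ are bounded by an $\varepsilon$-independent constant, the first one by the Sobolev embedding $H^2(\R)\hookrightarrow W^{1,\infty}(\R)$.

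\emph{Step (i).} Fix $\varepsilon_1,\varepsilon_2>0$, set $W=V^{\varepsilon_1}-V^{\varepsilon_2}$, subtract the corresponding copies of \eqref{eq:RSW SYM REG} and take the $\mathbb{L}^2$ scalar product with $W$. The diffusive terms produce $-2\varepsilon_1\|W_x\|_{\mathbb{L}^2}^2\le 0$, which I discard, plus a remainder $2(\varepsilon_1-\varepsilon_2)\int_\R V^{\varepsilon_2}_{xx}\cdot W\,dx$ bounded by $C|\varepsilon_1-\varepsilon_2|\,\|W\|_{\mathbb{L}^2}$ via the uniform $\mathbb{H}^2$ bound. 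The difference of the quasilinear fluxes is split as $S(V^{\varepsilon_1})W_x+\big(S(V^{\varepsilon_1})-S(V^{\varepsilon_2})\big)(V^{\varepsilon_2}-E)_x$; the first piece, paired with $W$, is integrated by parts using the self-adjointness of $S$ exactly as in \eqref{symog4}, costing only $\frac{1}{2}\|V^{\varepsilon_1}_x\|_{\mathbb{L}^\infty}\|W\|_{\mathbb{L}^2}^2$, while the second piece is dominated pointwise by $C|W|$ since $S$ is linear in $V$ and $\|V^{\varepsilon_2}_x\|_{\mathbb{L}^\infty}\le C$, so its pairing with $W$ is $\le C\|W\|_{\mathbb{L}^2}^2$. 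The Coriolis difference is $F\times W$, contributing at most $\|f\|_{L^\infty}\|W\|_{\mathbb{L}^2}^2$. Grönwall's lemma then gives $\sup_{[0,T_0]}\|W(t)\|_{\mathbb{L}^2}^2\le C|\varepsilon_1-\varepsilon_2|^2$, so $V^\varepsilon-E$ converges in $C([0,T_0];\mathbb{L}^2(\R))$ to some limit $V-E$.

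\emph{Steps (ii)--(iv).} By weak-$*$ compactness and lower semicontinuity of the norm, $V-E\in L^\infty(0,T_0;\mathbb{H}^2(\R))$ with $\|V(t)-E\|_{\mathbb{H}^2}\le 2M_0$; then $\|V^\varepsilon(t)-V(t)\|_{\mathbb{H}^2}\le 4M_0$ while $\|V^\varepsilon(t)-V(t)\|_{\mathbb{L}^2}\to 0$ uniformly in $t$, so the interpolation inequalities $\|g\|_{\mathbb{L}^\infty}^2\le C\|g\|_{\mathbb{L}^2}\|g_x\|_{\mathbb{L}^2}$ and $\|g_x\|_{\mathbb{L}^2}^2\le\|g\|_{\mathbb{L}^2}\|g_{xx}\|_{\mathbb{L}^2}$ (the latter already used above) force $V^\varepsilon\to V$ and $V^\varepsilon_x\to V_x$ in $C([0,T_0];\mathbb{L}^\infty(\R))$. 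This suffices to pass to the limit in each term of \eqref{eq:RSW SYM REG}: $S(V^\varepsilon)(V^\varepsilon-E)_x\to S(V)(V-E)_x$ and $F\times(V^\varepsilon-E)\to F\times(V-E)$ in $C([0,T_0];\mathbb{L}^2(\R))$, while $\varepsilon V^\varepsilon_{xx}\to 0$ in $C([0,T_0];\mathbb{L}^2(\R))$ because $\|V^\varepsilon_{xx}\|_{\mathbb{L}^2}\le 2M_0$; hence $V-E$ solves \eqref{eq:RSW SYM}, and the equation shows $V_t=-S(V)(V-E)_x-F\times(V-E)\in C([0,T_0];\mathbb{L}^2(\R))$. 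For Step (iii), $V-E\in L^\infty(0,T_0;\mathbb{H}^2(\R))\cap C([0,T_0];\mathbb{L}^2(\R))$ with $V_t\in C([0,T_0];\mathbb{L}^2(\R))$ gives weak continuity into $\mathbb{H}^2(\R)$; redoing the energy computations behind \eqref{symog4} and \eqref{symog9}--\eqref{symog10} directly on \eqref{eq:RSW SYM}, now \emph{without} the favourable diffusion terms (the estimate still closing, the flux being controlled by self-adjointness and interpolation rather than parabolic absorption), shows that $t\mapsto N(t)^2=\|V(t)-E\|_{\mathbb{L}^2}^2+\|V_{xx}(t)\|_{\mathbb{L}^2}^2$ is continuous, and a weakly continuous curve in $\mathbb{H}^2(\R)$ whose $\mathbb{H}^2$-equivalent norm $N(\cdot)$ is continuous is strongly continuous, so $V-E\in C([0,T_0];\mathbb{H}^2(\R))$. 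Step (iv): if $V_1,V_2\in C([0,T_0];E+\mathbb{H}^2(\R))$ both solve \eqref{eq:RSW SYM} with the same initial datum, their difference obeys the $\mathbb{L}^2$ estimate of Step (i) with the diffusion terms deleted, i.e. $\frac{d}{dt}\|V_1-V_2\|_{\mathbb{L}^2}^2\le C\|V_1-V_2\|_{\mathbb{L}^2}^2$ with $C$ controlled by the finite quantity $\sup_{[0,T_0]}(\|(V_1)_x\|_{\mathbb{L}^\infty}+\|(V_2)_x\|_{\mathbb{L}^\infty}+\|f\|_{L^\infty})$, so $V_1\equiv V_2$ by Grönwall.

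The routine energy algebra here mirrors that of Proposition \ref{thm:E&U RSW SYM REG}; the genuine difficulty is twofold, and both parts concern what the vanishing viscosity had been supplying for free. First, one must check that the $\mathbb{L}^2$ difference estimate still closes once the coercive term $-2\varepsilon\|W_x\|_{\mathbb{L}^2}^2$ is gone — it does, precisely because the quasilinear flux is tamed by the self-adjointness of $S$, not by parabolic smoothing. Second, and more delicate, one must recover the $\mathbb{H}^2$ time-regularity of the limit that parabolicity gave automatically for $\varepsilon>0$: the limit a priori only lies in $L^\infty(0,T_0;\mathbb{H}^2(\R))$, which is not regular enough to be tested against $V_{4x}$ directly, so the top-order energy identity and the continuity of $N(\cdot)$ must be obtained through a mollification and weak-continuity argument — the standard but somewhat technical step in vanishing-viscosity limits for quasilinear symmetric hyperbolic systems.
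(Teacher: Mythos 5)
Your proposal follows essentially the same route as the paper: an $\mathbb{L}^2$ Grönwall estimate on the difference of two regularized solutions (discarding the coercive viscous term and using the self-adjointness and Lipschitz continuity of $S$ together with the uniform $\mathbb{H}^2$ bound) to get a Cauchy sequence in $C([0,T_0];\mathbb{L}^2)$, interpolation against the uniform $\mathbb{H}^2$ bound to pass to the limit in the equation, weak $\mathbb{H}^2$-continuity of the limit plus continuity of the top-order norm (obtained from the $\varepsilon=0$ version of the $\mathbb{H}^2$ energy estimate) to upgrade to strong continuity, and the same $\mathbb{L}^2$ argument for uniqueness. The proposal is correct; it is in fact slightly more explicit than the paper about the justification of the limit equation and about the fact that the $\varepsilon=0$ energy identity for the limit requires a mollification argument, which the paper asserts without detail.
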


\begin{proof}
	
	We first prove
	
	\begin{lem}
		Consider $T_0$ the existence time of the solution that depends
		on the initial data but that is independent of $\varepsilon$.
		Then $V^\varepsilon-E$ converges in $C([0,T_0];  \mathbb{L}^2(\R)  )$ to a limit
		$V-E$.
	\end{lem}
	
	\begin{proof}
		
		Consider for $0<\eta<\varepsilon$ two solution $V^\varepsilon$ and $V^\eta$.
		Then the difference $W=V^\varepsilon-V^\eta$ is solution to
		\begin{equation}\label{m1}
		W_t+S(V^\varepsilon)W_x+(  S(V^\varepsilon)-S(V^\eta))V_x^\eta+F\times W=(\varepsilon-\eta)V_{xx}^\eta+\varepsilon W_{xx}.
		\end{equation}
		
		\noindent Considering the scalar product with $W$ leads to
		\begin{multline}\label{m2}
		 \frac12\frac{d}{dt}||W||^2_{  \mathbb{L}^2 }\leq (\varepsilon-\eta)||V_{xx}^\eta||_{  \mathbb{L}^2 }||W||_{  \mathbb{L}^2 }\\
		-\int_\R W. S(V^\varepsilon)W_xdx-\int_\R W. (  S(V^\varepsilon)-S(V^\eta))V_x^\eta dx.
		\end{multline}

		\noindent On the one hand since the map $V \mapsto S(V)$ is Lipschitzian
		from $\R^3$ into $\mathcal{L}(\R^3)$ we have that, using that
		$V^\eta$ is bounded uniformly with respect to $\varepsilon$ in $\mathbb{H}^2(\R)\subset \mathbb{W}^{1,\infty}(\R)$
		\begin{equation}\label{m3}\begin{split}
		\Big\vert \int_\R W. ( S(V^\varepsilon)-S(V^\eta))V_x^\eta dx \Big\vert \leq C ||W||^2_{  \mathbb{L}^2 }.
		\end{split}\end{equation}
		
		\noindent On the other hand using that the matrix $S$ is symmetric
		and that $V^\varepsilon$ is bounded uniformly with respect to $\varepsilon$ in  $\mathbb{H}^2(\R) \subset \mathbb{W}^{1,\infty}(\R)$
		\begin{equation}\label{m4}\begin{split}
		\Big|\int_\R W. S(V^\varepsilon)W_xdx \Big|=\frac12 \Big| \int_\R W.(S(V^\varepsilon))_xWdx \Big| \leq C ||W||^2_{\mathbb{ L}^2}.
		\end{split}\end{equation}
		
		Gathering these inequalities we obtain
				\begin{equation}\label{m5}
		\frac{d}{dt}||W||_{  \mathbb{L}^2 }\leq C \varepsilon+C||W||_{  \mathbb{L}^2 }.
		\end{equation}
		
		\noindent Therefore by the Gronwall lemma $||W(t)||_{ \mathbb{L}^2(\R)}\leq C(T_0,V_0)\varepsilon$
		and $V^\varepsilon$ is a Cauchy sequence in $C([0,T_0]; \mathbb{L}^2(\R) )$.
	\end{proof}

	We now use the classical lemma
	
	\begin{lem}
		Consider a sequence $w_\varepsilon$ that is bounded in $C([0,T_0]; H^2(\R) )$
		and that converges towards $w$ in $C([0,T_0]; L^2(\R))$.
		Then $w_\varepsilon$ converges towards $w$ in $C([0,T_0]; H^s(\R) )$
		for $s<2$, in $L^\infty([0,T_0]; H^2(\R))$ weakly star,
		and the limit $w$ is weakly continuous with values in $H^2(\R) $.
	\end{lem}
	
	\begin{proof}
		
		Since the sequence $w_\varepsilon$ is bounded in $L^\infty([0,T_0]; H^2(\R)) $
		then by Banach-Alaoglu-Bourbaki theorem \cite{brezis} $w_\varepsilon$
		converges in $L^\infty([0,T_0]; H^2(\R))$ weakly star to a function that
		is necessarily $w$. Then by interpolation
		$$ ||w-w_\varepsilon||_{H^s }\leq ||w-w_\varepsilon||^{2-s}_{L^2 }||w-w_\varepsilon||^{s}_{H^2 }, $$
		\noindent and we have the convergence in $H^s(\R) $. Since $w$ belongs to $C([0,T_0]; L^2(\R) ) \cap L^\infty([0,T_0]; H^2(\R) )$
		then by Strauss Lemma \cite{strauss} $w$ is weakly continuous with values in $H^2(\R) $.
	\end{proof}

	By interpolation estimate we deduce that $V^\varepsilon$ converge
	towards $V$ in space \\
	$E+C([0,T_0];\mathbb{H}^s(\R))$ for any $s<2$.
	Therefore we can pass to the limit in the equation and $V$ is solution
	to \eqref{eq:RSW SYM}. Moreover the solution to this equation
	is unique. Indeed, we consider two solutions of \eqref{eq:RSW SYM} $V^1$ and $V^2$. With similar arguments, the difference $W = V^1-V^2$ satisfies
 $$\frac{d}{dt} || W ||_{  \mathbb{L}^2} \leq C || W ||_{  \mathbb{L}^2},$$
 which leads to $$ || W ||_{\mathbb{L}^2} \leq 0,$$ i.e uniqueness of the solution in $E+C([0,T_0];\mathbb{H}^s(\R))$ for any $s<2$.
	
	It remains to prove that $V$ belongs to $E+C(0,T_0;\mathbb{H}^2(\R))$. We just have to prove
	that the function $t \mapsto ||V_{xx}(t)||^2_{\mathbb{L}^2}$ is continuous.
	Since $V_{xx}$ is weakly continuous with values in $\mathbb{L}^2(\R)$ then the final
	result comes promptly.
	Using that \eqref{symog10} is valid for $\varepsilon=0$
	we have that
	\begin{equation}\label{m10}
	\Big| ||V_{xx}(t)||^2_{\mathbb{L}^2(\R) }-||V_{xx}(t_0)||^2_{\mathbb{L}^2(\R) } \Big|\leq C \int_{t_0}^t \left( ||V(s)-E||^2_{\mathbb{H}^2(\R) }   \right)ds .
	\end{equation}
	
	\noindent This completes the proof of the Theorem. \end{proof}

\section{Global existence for small initial data}

The following statement asserts that if the initial data is close
enough to the fluid at rest solution, then the solution of the regularized system \eqref{eq:RSW SYM REG} exists forever.

\begin{thm}\label{thm:Global solution}
Fix $\varepsilon>0$. Assume $V(0) - E$ be in $\mathbb{H}^2(\R)$.
 Assume that  $||f'||_{L^\infty} <+\infty $.
There exists $\delta$ small enough (depending on $\varepsilon$ and on $||f'||_{L^\infty} $)
such that if $\Vert V(0) - E \Vert_{\mathbb{H}^1} \leq \delta$ then the solution lasts forever.
\end{thm}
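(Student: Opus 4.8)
Here is the plan I would follow.

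\smallskip
Since $\varepsilon>0$ is fixed, \eqref{eq:RSW SYM REG} is a semilinear parabolic system, and the local theory already established — Proposition~\ref{thm:E&U RSW SYM REG} together with the {\it a priori} estimate in its proof — provides a continuation criterion: the solution extends past any time $t_1$ as soon as $\|V(t_1)-E\|_{\mathbb{H}^2}$ is finite, with a lifespan bounded below by a quantity depending only on $\|V(t_1)-E\|_{\mathbb{H}^2}$, $\|f'\|_{L^\infty}$ and $\|f''\|_{L^\infty}$ (this is the content of the stopping time $\tau$ appearing there, combined with $\mathbb{H}^2(\R)\subset\mathbb{W}^{1,\infty}(\R)$). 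Hence it is enough to show that, for small enough data, $t\mapsto\|V(t)-E\|_{\mathbb{H}^2}$ cannot blow up in finite time; we shall in fact bound its growth exponentially. First I would record the three energy identities in differential form: the $\mathbb{L}^2$ one, \eqref{symog5}--\eqref{cor3}, in which the Coriolis term contributes nothing because $F\times(V-E)$ is pointwise orthogonal to $V-E$; the $\dot{\mathbb{H}}^1$ one obtained by testing \eqref{eq:RSW SYM REG} against $-V_{xx}$, in which the Coriolis term contributes only $-\int_\R f'(uv_x-vu_x)\,dx$, of size at most $\|f'\|_{L^\infty}\|V-E\|_{\mathbb{L}^2}\|V_x\|_{\mathbb{L}^2}$; and the $\dot{\mathbb{H}}^2$ one, \eqref{symog8}--\eqref{symog10}, in which the Coriolis term enters only through $f'$ and $f''$. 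Each identity carries the full dissipation $2\varepsilon\|V_x\|_{\mathbb{L}^2}^2$, $2\varepsilon\|V_{xx}\|_{\mathbb{L}^2}^2$, $2\varepsilon\|V_{xxx}\|_{\mathbb{L}^2}^2$, and every remaining nonlinear (flux) term carries a factor $\|V_x\|_{\mathbb{L}^\infty}$, controlled through $\|V_x\|_{\mathbb{L}^\infty}^2\le 2\|V_x\|_{\mathbb{L}^2}\|V_{xx}\|_{\mathbb{L}^2}$ and hence quadratically small in the norm of $V-E$.

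\smallskip
Next I would set up a bootstrap. Fix $\delta$ (to be chosen depending on $\varepsilon$ and $\|f'\|_{L^\infty}$) and let $T^\ast$ be the supremum of the times $T$ on which $\|V(t)-E\|_{\mathbb{H}^1}\le 2\delta$; by continuity $T^\ast>0$. On $[0,T^\ast)$ the quantity $\|V_x\|_{\mathbb{L}^\infty}$ is controlled by interpolation, so the flux terms are absorbed by the dissipation. The Coriolis contributions proportional to $\|f'\|_{L^\infty}\|V-E\|_{\mathbb{L}^2}\|V_x\|_{\mathbb{L}^2}$, and their $\|f''\|_{L^\infty}$ analogues, are split by Young's inequality against the dissipation; what cannot be absorbed is a remainder $\le \tfrac{C}{\varepsilon}\big(\|f'\|_{L^\infty}^2+\|f''\|_{L^\infty}^2\big)\|V-E\|_{\mathbb{L}^2}^2$ — there is no Poincaré inequality on $\R$ to dissipate $\|V-E\|_{\mathbb{L}^2}$ — but since this remainder is $\le \tfrac{C}{\varepsilon}(\cdots)\|V-E\|_{\mathbb{H}^1}^2$ it only feeds an exponential growth. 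Using in addition the interpolation inequality $\|V_x\|_{\mathbb{L}^2}^2\le\|V_{xx}\|_{\mathbb{L}^2}\|V-E\|_{\mathbb{L}^2}$ (already used in Section~\ref{proofofthemainthm}) to convert the dissipation into a Riccati-type damping — for instance $2\varepsilon\|V_{xx}\|_{\mathbb{L}^2}^2\ge 2\varepsilon\|V_x\|_{\mathbb{L}^2}^4/\|V-E\|_{\mathbb{L}^2}^2$, and likewise one level higher, which also absorbs the short transient in which a possibly large $\|V_{xx}(0)\|_{\mathbb{L}^2}$ is regularized by the parabolic smoothing — one forces $\|V_x\|_{\mathbb{L}^2}$ and $\|V_{xx}\|_{\mathbb{L}^2}$ down to an equilibrium of order $\delta$, so that $\|V_x\|_{\mathbb{L}^\infty}=O(\delta)$ and the flux corrections are of order $\delta$ relative to the linear part. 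Collecting everything gives, on $[0,T^\ast)$, a differential inequality of the form $\tfrac{d}{dt}\|V(t)-E\|_{\mathbb{H}^2}^2\le c\big(\varepsilon,\|f'\|_{L^\infty},\|f''\|_{L^\infty}\big)\,\|V(t)-E\|_{\mathbb{H}^2}^2$, hence an exponential bound on $\|V(t)-E\|_{\mathbb{H}^2}$ there; choosing $\delta$ small enough keeps $\|V(t)-E\|_{\mathbb{H}^1}$ below $2\delta$, the bootstrap can be re-opened, and combined with the continuation criterion of the first step this yields a global (maximal time $+\infty$) solution.

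\smallskip
The step I expect to be the main obstacle is exactly this interplay between the variable Coriolis parameter and the absence of a spectral gap for $\varepsilon\partial_x^2$ on the whole line. Because $\|V-E\|_{\mathbb{L}^2}$ is not dissipated, the term $\|f'\|_{L^\infty}\|V-E\|_{\mathbb{L}^2}\|V_x\|_{\mathbb{L}^2}$ produced by the Coriolis force in the $\dot{\mathbb{H}}^1$ estimate cannot be removed; it can only be tamed by the smallness of the data together with the high-frequency dissipation, used through the Riccati mechanism above, which is precisely what keeps $\|V_x\|_{\mathbb{L}^\infty}$ — and therefore the flux corrections — small enough to close the estimate. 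Making this mechanism quantitative, and in particular tracking the slow exponential drift of the energy while the bootstrap hypothesis is valid and controlling the regularization of a possibly large $\|V_{xx}(0)\|_{\mathbb{L}^2}$, is the delicate part of the argument; it is also the reason the threshold $\delta$ has to be chosen small depending on both $\varepsilon$ and $\|f'\|_{L^\infty}$.
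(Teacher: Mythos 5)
There is a genuine gap, and it sits exactly where you locate the ``main obstacle'': the absence of a spectral gap for $\varepsilon\partial_x^2$ on $\R$. Your scheme culminates in a differential inequality of the form $\tfrac{d}{dt}\Vert V(t)-E\Vert_{\mathbb{H}^2}^2\le c\,\Vert V(t)-E\Vert_{\mathbb{H}^2}^2$ with $c=c(\varepsilon,\Vert f'\Vert_{L^\infty},\Vert f''\Vert_{L^\infty})$ \emph{independent of} $\delta$, valid only while the bootstrap hypothesis $\Vert V(t)-E\Vert_{\mathbb{H}^1}\le 2\delta$ holds. An exponential bound of this type allows the norm to double by time $t=\ln 2/c$, a time that does not improve as $\delta\to 0$; hence it cannot re-establish the bootstrap hypothesis, and the argument is circular: the constant-coefficient Gronwall bound needs smallness to hold, but does not preserve smallness. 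The root of the problem is the zeroth-order level: the flux term in \eqref{symog3}--\eqref{symog5} contributes $\Vert V_x\Vert_{\mathbb{L}^\infty}\Vert V-E\Vert_{\mathbb{L}^2}^2$, and since $\Vert V-E\Vert_{\mathbb{L}^2}$ is not dissipated on the line, nothing in your scheme prevents $\Vert V-E\Vert_{\mathbb{L}^2}$ from drifting past $2\delta$ in finite time, no matter how small $\delta$ is. Your Riccati mechanism acts only on $\Vert V_x\Vert_{\mathbb{L}^2}$ and $\Vert V_{xx}\Vert_{\mathbb{L}^2}$ and does not touch this low-frequency drift.

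The paper closes precisely this gap with a structural ingredient your proposal lacks: the entropy--flux pair $(\eta,G)$ of \eqref{eq:def eta}--\eqref{eq: def G}. Because the flux contribution becomes an exact derivative $\partial_x G(V)$ (so it integrates to zero, unlike the term $\int_\R (V-E).S_x(V-E)\,dx$ in the plain $\mathbb{L}^2$ estimate) and because the Coriolis term is orthogonal to $\eta'(V)$, the identity \eqref{dipole} shows that $\int_\R\eta(V)\,dx$ is \emph{non-increasing}. Near $E$ the entropy is equivalent to $\Vert V-E\Vert_{\mathbb{L}^2}^2$, which yields the uniform-in-time bound \eqref{hop2}: $\Vert V(t)-E\Vert_{\mathbb{L}^2}^2+4\varepsilon\int_0^t\Vert V_x\Vert_{\mathbb{L}^2}^2\,ds\le 9\delta^2$. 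This gives two things your energy estimates cannot: a bound on $\Vert V-E\Vert_{\mathbb{L}^2}$ valid for all time (no exponential drift), and a \emph{globally integrated} dissipation $\int_0^\infty\Vert V_x\Vert_{\mathbb{L}^2}^2\,dt\le 9\delta^2/(4\varepsilon)$, which is exactly what the paper uses to integrate the $\dot{\mathbb{H}}^1$ inequality of Lemma \ref{pro:global existence} in time and close the bootstrap with a bound $\Vert V_x(t)\Vert_{\mathbb{L}^2}^3\le C(\varepsilon,\delta)\delta^2/\varepsilon+\delta^3\le\delta^{3/2}$. Without such a monotone (or time-integrable) quantity at the $\mathbb{L}^2$ level, the continuation-plus-Gronwall strategy you outline cannot yield global existence.
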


\begin{rem}
We do not expect the result to be true in the case $\varepsilon=0$
since the solution of this hyperbolic system may develop shocks \cite{Majda}.
\end{rem}

The rest of the section is devoted to the proof of this theorem.
To begin with, we observe that it is enough to prove an $\mathbb{H}^1$ bound
for the solution, since the $\mathbb{H}^2$ regularity propagates along the flow of the solutions.
The first step in the proof is then a entropy-flux pair argument.

\subsection{Introducing an entropy-flux pair}

We claim that the functions
\begin{equation}\label{eq:def eta}
 \eta(V) = \frac{\lambda^2}{8}(u^2+v^2) + \frac{1}{2}\left( \frac{\lambda^2- \bar\lambda^2}{4}  \right)^2
\end{equation}
\noindent and
\begin{equation}\label{eq: def G}
G(V) =  \frac{\lambda^2u}{8}(u^2+v^2) + \frac{\lambda^2u}{4}\left( \frac{\lambda^2- \bar\lambda^2}{4} \right)
\end{equation}
\noindent defines an entropy-flux pair for the solution of the hyperbolic system.
Actually if $V$ is a smooth solution of equation \eqref{eq:RSW SYM}
then $$\partial_t \eta(V)+ \partial_x G(V)= 0. $$

For the solution of the regularized equation, the dissipation plays an important role
and we have
\begin{equation}\label{dipole}
 \partial_t \int_\R  \eta(V)dx + \int_{\mathbb{R}} \partial_x G(V) dx +\varepsilon \int_\R V_x.\eta''(V)V_xdx =0,
\end{equation}

\noindent where the hessian matrix of $\eta$ is given by
$$\eta''(V) = \begin{pmatrix}
	\frac{u^2+v^2}{4} + \frac{3 \lambda^2}{8} - \frac{\bar\lambda^2}{8} & \frac{\lambda u}{2} & \frac{\lambda v}{2} \\
	\frac{\lambda u}{2} & \frac{\lambda^2}{4} & 0 \\
	\frac{\lambda v}{2} & 0 & \frac{\lambda^2}{4}
\end{pmatrix}$$ and satisfies $\eta''(E) =  \frac{\bar{\lambda}^2}{4} I_3$, i.e. a constant times the identity matrix.

\begin{lem}\label{hf}
Assume $||V(0)-E||_{\mathbb{H}^1}\leq \delta$. Introduce
$$T_\delta=\inf\{ t>0; \max(||V(t)-E||_{\mathbb{L}^2}, ||V_x(t)||_{\mathbb{L}^2}>   \sqrt{\delta} \}.$$
For $\delta$ small enough and satisfying $\sqrt{\delta} \leq \frac{\bar\lambda}{2}$,  and $t\in [0,T_\delta]$ we have that by continuity
\begin{itemize}
\item $\eta''(V) \geq  \frac{\bar\lambda^2}{8} I_3$.
\item  $||\lambda -\bar\lambda||_{L^\infty}< \sqrt{ \delta }$ ( and then $\frac{\bar\lambda}{2} \leq \lambda \leq \frac32\bar\lambda$ ).
\end{itemize}
\end{lem}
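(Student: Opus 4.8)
The plan is to turn the smallness of $V(t)-E$ in $\mathbb{H}^1$ on $[0,T_\delta]$ into a pointwise ($\mathbb{L}^\infty$) smallness, and then to exploit the continuity of $V\mapsto\eta''(V)$ near $V=E$, where $\eta''(E)=\frac{\bar\lambda^2}{4}I_3$ is positive definite.

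First I would note that $T_\delta>0$: by Proposition~\ref{thm:E&U RSW SYM REG} the solution $V$ of \eqref{eq:RSW SYM REG} is continuous into $E+\mathbb{H}^2(\R)$, so $t\mapsto\max(\|V(t)-E\|_{\mathbb{L}^2},\|V_x(t)\|_{\mathbb{L}^2})$ is continuous and equals at most $\delta<\sqrt\delta$ at $t=0$ (choosing $\delta<1$); hence, by definition of $T_\delta$,
\[
\|V(t)-E\|_{\mathbb{L}^2}\le\sqrt\delta\quad\text{and}\quad\|V_x(t)\|_{\mathbb{L}^2}\le\sqrt\delta\qquad\text{for all }t\in[0,T_\delta].
\]
Then I would apply the sharp one-dimensional inequality $\|w\|_{L^\infty(\R)}^2\le\|w\|_{L^2(\R)}\|w_x\|_{L^2(\R)}$ for $w\in H^1(\R)$, which follows from $w(x)^2=2\int_{-\infty}^x ww'=-2\int_x^{+\infty}ww'$. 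Applied to each scalar component of $V-E$ and summed, it gives $|V(t,x)-E|^2\le\|V(t)-E\|_{\mathbb{L}^2}\|V_x(t)\|_{\mathbb{L}^2}\le\delta$ for every $x\in\R$, i.e.\ $\|V(t)-E\|_{\mathbb{L}^\infty(\R)}\le\sqrt\delta$ on $[0,T_\delta]$. In particular $\|\lambda(t)-\bar\lambda\|_{L^\infty}<\sqrt\delta$ (strictly, since a nonzero element of $H^2(\R)$ is not extremal in that inequality), and with $\sqrt\delta\le\bar\lambda/2$ this gives $\frac{\bar\lambda}{2}\le\lambda(t,x)\le\frac32\bar\lambda$, which is the second item; one also reads off the pointwise bounds $|u(t,x)|\le\sqrt\delta$, $|v(t,x)|\le\sqrt\delta$ and $u(t,x)^2+v(t,x)^2\le\delta$.

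For the first item I would estimate $\eta''(V)-\eta''(E)$ entry by entry. From the explicit Hessian and $\eta''(E)=\frac{\bar\lambda^2}{4}I_3$, every entry of $\eta''(V)-\eta''(E)$ is a polynomial in $(\lambda-\bar\lambda,u,v)$ with no constant term: the $(1,1)$ entry is $\frac{u^2+v^2}{4}+\frac38(\lambda-\bar\lambda)(\lambda+\bar\lambda)$, the $(2,2)$ and $(3,3)$ entries are $\frac14(\lambda-\bar\lambda)(\lambda+\bar\lambda)$, the $(1,2)$ and $(1,3)$ entries are $\frac{\lambda u}{2}$ and $\frac{\lambda v}{2}$, and the $(2,3)$ entry vanishes. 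Plugging in the pointwise bounds above together with $\lambda+\bar\lambda\le\frac52\bar\lambda$ and $\lambda\le\frac32\bar\lambda$, each entry is at most $C(\bar\lambda)\sqrt\delta$ in absolute value for $\delta\le1$, hence $\|\eta''(V(t,x))-\eta''(E)\|_{\mathcal L(\R^3)}\le C'(\bar\lambda)\sqrt\delta$. Choosing $\delta$ small enough, depending only on $\bar\lambda$ and compatible with $\sqrt\delta\le\bar\lambda/2$, so that $C'(\bar\lambda)\sqrt\delta\le\frac{\bar\lambda^2}{8}$, I conclude, using that a symmetric matrix $M$ satisfies $M\ge-\|M\|_{\mathcal L(\R^3)}I_3$, that
\[
\eta''(V(t,x))=\eta''(E)+\bigl(\eta''(V(t,x))-\eta''(E)\bigr)\ge\frac{\bar\lambda^2}{4}I_3-\frac{\bar\lambda^2}{8}I_3=\frac{\bar\lambda^2}{8}I_3
\]
for all $t\in[0,T_\delta]$ and $x\in\R$, which is the first item.

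The argument is elementary and I do not anticipate a real obstacle. The only points that need care are using the sharp constant $1$ (rather than $2$) in the $L^\infty$--$L^2$--$\dot H^1$ interpolation, so that exactly the threshold $\sqrt\delta$ is reached, and the bookkeeping of the $\bar\lambda$-dependent constants in the entries of $\eta''(V)-\eta''(E)$ when passing to the operator norm bound.
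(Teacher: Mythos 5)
Your proof is correct and fills in exactly the argument the paper leaves implicit (the lemma is asserted ``by continuity'' with no written proof): the Agmon-type bound $\|w\|_{L^\infty}^2\le\|w\|_{L^2}\|w_x\|_{L^2}$ converts the $\mathbb{L}^2$ bounds on $V-E$ and $V_x$ on $[0,T_\delta]$ into pointwise smallness, and a perturbation of $\eta''(E)=\frac{\bar\lambda^2}{4}I_3$ gives the coercivity. All the entrywise estimates and the conclusion check out.
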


\noindent
We now take advantage of this lemma. Assume $t\leq T_\delta$.
Then integrating \eqref{dipole} in time leads to
\begin{equation}\begin{split}\label{hop1}
\int_\R \left( (\lambda^2-\bar\lambda^2)^2+ 4 \lambda^2(u^2+v^2)\right)dx
+ 4 \bar{\lambda}^2 \varepsilon \int_0^t ||V_x(s)||^2_{\mathbb{L}^2 }ds\leq\\ \int_\R \left( (\lambda_0^2-\bar\lambda^2)^2+ 4\lambda_0^2(u_0^2+v_0^2)\right)dx.
\end{split}\end{equation}

\noindent Using Lemma \ref{hf}, we bound by above the right hand side of \eqref{hop1} by $$9 \bar{\lambda}^2||V(0)-E||^2_{\mathbb{L}^2 }\leq 9\bar{\lambda}^2\delta^2 .$$ Similarly we bound by below the left
hand side of \eqref{hop1} by   $$\bar{\lambda}^2||V-E||^2_{\mathbb{L}^2 }+ 4\bar{\lambda}^2 \varepsilon \int_0^t  ||V_x||^2_{\mathbb{L}^2 }ds.$$

\noindent We summarize this as

\begin{equation}\label{hop2}
||V(t)-E||^2_{\mathbb{L}^2 }+ 4 \varepsilon \int_0^t ||V_x(s)||^2_{\mathbb{L}^2 }ds\leq 9\delta^2.
\end{equation}
We infer from this inequality that  for $\delta$ small enough, we have $||V(t)-E||_{L^2(\R)} \leq  3\delta \leq  \sqrt{ \delta }$ forever,
at least as long as $||V_x(t)||_{L^2(\R)}$ satisfies the same inequality.

\subsection{Seeking an estimate for $V_x$}

In the first step we use that the solution remains close
to the rest solution in $L^\infty(\R)$. To prove that this is true
requires an $L^2$ estimate on $V_x$.

\begin{lem} \label{pro:global existence}
For $t\leq T_\delta$ we have that
$$\frac{d}{dt} \Vert V_x(t) \Vert_{\mathbb{L}^2}^2 \leq  \frac{3}{2(4\varepsilon)^\frac13} \Vert V_x(t) \Vert_{\mathbb{L}^2}^\frac{10}{3}
  + 6  \delta  \, \Vert f' \Vert_{\infty}\Vert V_x(t) \Vert_{\mathbb{L}^2}. $$	
\end{lem}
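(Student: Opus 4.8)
The plan is to differentiate the equation \eqref{eq:RSW SYM REG} in $x$, set $Z = V_x$, and test the resulting equation against $Z$ in $\mathbb{L}^2(\R)$. Differentiating gives $Z_t + S(V)Z_x + S_x Z + (F\times(V-E))_x = \varepsilon Z_{xx}$, where $(F\times(V-E))_x = F_x\times(V-E) + F\times Z$. Taking the scalar product with $Z$ and integrating over $\R$, the diffusive term yields $-\varepsilon\int_\R |Z_x|^2\,dx$, the term $F\times Z$ integrates to zero since it is orthogonal to $Z$ pointwise, and the term $\int_\R Z\cdot S(V)Z_x\,dx$ is handled exactly as in \eqref{m4}: using that $S$ is self-adjoint, it equals $-\tfrac12\int_\R Z\cdot S_x Z\,dx$, so combined with the $\int_\R Z\cdot S_x Z\,dx$ coming from the $S_x Z$ term this produces a net contribution bounded by $\tfrac12\,|||S_x|||\int_\R |Z|^2\,dx \leq \tfrac12\|V_x\|_{\mathbb{L}^\infty}\|V_x\|_{\mathbb{L}^2}^2$ using \eqref{sbound}. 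The remaining term $\int_\R Z\cdot(F_x\times(V-E))\,dx$ is bounded by $\|f'\|_\infty\|V_x\|_{\mathbb{L}^2}\|V-E\|_{\mathbb{L}^2}$, and here we invoke Lemma \ref{hf}/\eqref{hop2} which gives $\|V(t)-E\|_{\mathbb{L}^2}\leq 3\delta$ for $t\leq T_\delta$, yielding the term $3\|f'\|_\infty\delta\|V_x\|_{\mathbb{L}^2}$. Collecting everything carefully with the correct combinatorial constants (the numerology must reproduce the $6\delta\|f'\|_\infty\|V_x\|_{\mathbb{L}^2}$ in the statement) gives
\begin{equation*}
\frac{d}{dt}\|V_x\|_{\mathbb{L}^2}^2 + 2\varepsilon\|V_{xx}\|_{\mathbb{L}^2}^2 \leq C\,\|V_x\|_{\mathbb{L}^\infty}\|V_x\|_{\mathbb{L}^2}^2 + 6\delta\|f'\|_\infty\|V_x\|_{\mathbb{L}^2}.
\end{equation*}

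The crucial remaining point — and what I expect to be the main obstacle — is to absorb $\|V_x\|_{\mathbb{L}^\infty}$ into a gain from the diffusion, since otherwise the cubic term $\|V_x\|_{\mathbb{L}^\infty}\|V_x\|_{\mathbb{L}^2}^2$ cannot be closed by smallness alone. The tool is the Gagliardo–Nirenberg inequality on $\R$: $\|V_x\|_{\mathbb{L}^\infty}\leq C\|V_x\|_{\mathbb{L}^2}^{1/2}\|V_{xx}\|_{\mathbb{L}^2}^{1/2}$. Substituting, the cubic term becomes $\leq C\|V_x\|_{\mathbb{L}^2}^{5/2}\|V_{xx}\|_{\mathbb{L}^2}^{1/2}$, and then Young's inequality with exponents $(4/3,4)$ splits this as $2\varepsilon\|V_{xx}\|_{\mathbb{L}^2}^2 + C'\varepsilon^{-1/3}\|V_x\|_{\mathbb{L}^2}^{10/3}$. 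The $2\varepsilon\|V_{xx}\|_{\mathbb{L}^2}^2$ cancels against the dissipation term on the left, leaving precisely $\frac{d}{dt}\|V_x\|_{\mathbb{L}^2}^2 \leq \frac{3}{2(4\varepsilon)^{1/3}}\|V_x\|_{\mathbb{L}^2}^{10/3} + 6\delta\|f'\|_\infty\|V_x\|_{\mathbb{L}^2}$. The delicate bookkeeping is tracking the precise numerical constant so that it matches $\tfrac{3}{2(4\varepsilon)^{1/3}}$; this forces specific choices of the Young's inequality weight and of the constant in Gagliardo–Nirenberg (on $\R$ one has the sharp form with an explicit constant), and one should present that choice explicitly rather than hiding it in a generic $C$.

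One subtlety worth flagging: the identity $\int_\R Z\cdot S(V)Z_x\,dx = -\tfrac12\int_\R Z\cdot (S(V))_x Z\,dx$ requires that the boundary terms at $\pm\infty$ vanish, which holds since $V-E\in\mathbb{H}^2(\R)$ and hence $V_x, V_{xx}$ decay; similarly the integration by parts producing $-\varepsilon\|V_{xx}\|_{\mathbb{L}^2}^2$ is legitimate for the $\mathbb{H}^2$ solution constructed in Proposition \ref{thm:E&U RSW SYM REG}. With these justifications in place, the estimate of Lemma \ref{pro:global existence} follows, and it will subsequently be combined with \eqref{hop2} and a bootstrap/continuity argument on $T_\delta$ to conclude global existence.
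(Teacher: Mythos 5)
Your proposal is correct and follows essentially the same route as the paper: the paper tests \eqref{eq:RSW SYM REG} against $-V_{xx}$ rather than differentiating and testing against $V_x$, but after integration by parts these are the same computation, and both then use the symmetry of $S$, the bound \eqref{sbound}, the Gagliardo--Nirenberg/Young absorption of $\varepsilon\Vert V_{xx}\Vert_{\mathbb{L}^2}^2$, and \eqref{hop2} for the $3\delta$ bound on $\Vert V-E\Vert_{\mathbb{L}^2}$. The only bookkeeping you still owe is the explicit Young weight producing $\varepsilon\Vert V_{xx}\Vert_{\mathbb{L}^2}^2+\tfrac34(4\varepsilon)^{-1/3}\Vert V_x\Vert_{\mathbb{L}^2}^{10/3}$, which is exactly what the paper does.
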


\begin{proof}
	Multipling \eqref{eq:RSW SYM REG} by $-V_{xx}$ leads to, after integration by parts
$$ \frac12\frac{d}{dt} ||V_x||^2_{\mathbb{L}^2 }+\varepsilon ||V_{xx}||^2_{\mathbb{L}^2 }=
\int_\R V_{xx}. F\times (V-E)+\int_\R V_{xx}.S(V)V_x.$$
\noindent On the one hand since $S(V)$ is symmetric
$$ \int_\R V_{xx}.S(V)V_x = -\frac12 \int_\R V_x.S_xV_x \leq \frac12 \int_\R ||S_x||_{\mathcal{L}(\R^3)}|V_x|^2. $$
\noindent Proceeding as above this is bounded by above by, appealing Sobolev embedding and Gagliardo-Nirenberg
identity
$$ \int_\R |V_x|^3 dx \leq ||V_x||_{\mathbb{L}^2}^\frac52 ||V_{xx}||_{\mathbb{L}^2}^\frac12\leq  \varepsilon   ||V_{xx}||^2_{\mathbb{L}^2}+\frac 34 \frac{||V_x||_{\mathbb{L}^2}^{\frac{10}{3}}}{(4\varepsilon)^\frac13}. $$
\noindent On the other hand,
\begin{equation*}\begin{split}
 \int_\R V_{xx}.F\times (V-E) = - \int_\R V_x.F_x\times (V-E).
\end{split}\end{equation*}	

\noindent Thus, appealing \eqref{hop2}

\begin{equation*}\begin{split}
\int_\R V_{xx}.F\times (V-E)\leq   ||f'||_{\infty} ||V-E||_{\mathbb{L}^2}||V_x||_{\mathbb{L}^2}\leq
 3\delta 
||f'||_{\infty} ||V_x||_{\mathbb{L}^2},
\end{split}\end{equation*}	
and the proof of lemma \ref{pro:global existence} is achieved.

\end{proof}

\subsection{Concluding the proof}

We infer from Lemma \ref{pro:global existence} that
$$ \frac{d}{dt} ||V_x(t)||^3_{\mathbb{L}^2}\leq \frac{9}{4 (4\varepsilon)^\frac13} \Vert V_x(t) \Vert_{{\mathbb{L}^2}}^\frac{13}{3}
+     9 \delta  \, \Vert f' \Vert_{\infty}\Vert V_x(t) \Vert_{\mathbb{L}^2}^2
$$
$$\leq \left( \frac{9}{4 (4\varepsilon)^\frac13}  (\sqrt \delta)^\frac{7}{3}
+    9 \delta \, \Vert f' \Vert_{\infty}\right)\Vert V_x(t) \Vert_{\mathbb{L}^2}^2.
$$
\noindent Integrating in time and appealing \eqref{hop2} we then have that for $t\leq T_\delta$
$$ ||V_x(t)||^3_{{\mathbb{L}^2}}\leq  \left( \frac{9}{4 (4\varepsilon)^\frac13}  ( \sqrt \delta)^\frac{7}{3}
+   9 \delta \, \Vert f' \Vert_{\infty}\right)\frac{9\delta^2 }{4\varepsilon} + \delta^3. $$
\noindent Choosing $\delta$ small enough depending on $\varepsilon$
such that the right hand side of this inequality is bounded by above by
$  ( \sqrt \delta)^3$
provides that the $L^2$ bound on $V_x$ is valid forever.
Therefore the solution exists for any time.


\section{Link between solutions of \eqref{eq:RSW} and \eqref{eq:RSW SYM}}\label{plus}

To complete the proof of theorem \ref{thm:E&U RSW}, we have to show we can perform the change of unknown $h\mapsto \sqrt h$ and vice-versa.

\begin{prop}
	Consider the solution to the limit problem {\eqref{eq:RSW}}
	defined on $[0,T_0]$. Then there exists a constant
	$\alpha$ that depends on the initial data such that
	$$ 0<\alpha \leq h(t,x) \leq \frac 1 \alpha. $$
\end{prop}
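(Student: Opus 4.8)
The plan is to integrate the mass equation along characteristics. Let $(h,u,v)$ be the solution of \eqref{eq:RSW} on $[0,T_0]$ obtained from the solution $V=(\lambda,u,v)^T$ of \eqref{eq:RSW SYM} via $h=\lambda^2/(4g)$; recall that $h-\bar h,u,v\in C([0,T_0];H^2(\R))$, that $T_0<\infty$, that $h_0=\lambda_0^2/(4g)$ with $h_0\geq\underline h>0$, and that the first equation of \eqref{eq:RSW} reads $h_t+uh_x+hu_x=0$.

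First I set up the characteristics. By the Sobolev embedding $H^2(\R)\hookrightarrow W^{1,\infty}(\R)$ the quantity $M:=\sup_{[0,T_0]}\|u_x(t)\|_{L^\infty(\R)}$ is finite, $u$ is continuous on $[0,T_0]\times\R$, and $u$ is uniformly Lipschitz in $x$; hence for each $x_0\in\R$ the Cauchy problem $\dot X(t)=u(t,X(t))$, $X(0)=x_0$, has a unique solution $t\mapsto X(t;x_0)$ on $[0,T_0]$, and $x_0\mapsto X(t;x_0)$ is a homeomorphism of $\R$ onto $\R$. Moreover, writing the mass equation as $h_t=-(hu)_x$ and using that $H^2(\R)$ is a Banach algebra, one gets $h_t\in C([0,T_0];H^1(\R))$, so that $h\in C^1([0,T_0];H^1(\R))\cap C([0,T_0];H^2(\R))$ has continuous partial derivatives and the chain rule is licit along $X(\cdot\,;x_0)$.

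Then I integrate. Along a characteristic the mass equation $h_t+uh_x=-hu_x$ becomes the linear scalar ODE $\frac{d}{dt}y(t)=-u_x(t,X(t;x_0))\,y(t)$ for $y(t):=h(t,X(t;x_0))$, hence
\[
y(t)=h_0(x_0)\exp\!\Big(-\!\int_0^t u_x(s,X(s;x_0))\,ds\Big)
\quad\Longrightarrow\quad
e^{-MT_0}h_0(x_0)\le y(t)\le e^{MT_0}h_0(x_0).
\]
Since $\underline h\le h_0\le \bar h+\|h_0-\bar h\|_{L^\infty(\R)}=:\overline H<\infty$ uniformly in $x_0$, and since the flow is onto $\R$, every $(t,x)\in[0,T_0]\times\R$ equals $(t,X(t;x_0))$ for a suitable $x_0$, so $e^{-MT_0}\underline h\le h(t,x)\le e^{MT_0}\overline H$. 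The proposition then holds with $\alpha:=\min\big(e^{-MT_0}\underline h,\,(e^{MT_0}\overline H)^{-1}\big)$, which lies in $(0,1]$ because $\alpha^2\le\underline h/\overline H\le1$. The identical computation applied to $\lambda=2\sqrt{gh}$, whose first equation reads $\lambda_t+u\lambda_x=-\tfrac12\lambda u_x$, shows $\lambda$ stays in a fixed compact subinterval of $(0,\infty)$; this is exactly what validates the change of unknowns $h\mapsto 2\sqrt{gh}$ of Section \ref{proofofthemainthm} (the converse $\lambda\mapsto\lambda^2/(4g)$ being a smooth formula), and completes the proof of Theorem \ref{thm:E&U RSW}.

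The only step requiring genuine care is making the method of characteristics rigorous: that $u$ is jointly continuous and uniformly Lipschitz in $x$, so that the characteristic flow is a well-defined homeomorphism, and that $h$ is $C^1$ in time, so that the chain rule may be used along characteristics. Both facts follow at once from the $C([0,T_0];H^2(\R))$ regularity of the solution together with the equation itself; once they are in place, the bound is a one-line explicit-ODE (or Gronwall) estimate.
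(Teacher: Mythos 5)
Your proof is correct, but it follows a genuinely different route from the paper's. The paper works in Eulerian variables: it first notes that $\int_\R(|h|-h)\,dx$ is conserved, introduces the stopping time $T=\inf\{t:\exists x,\ h(t,x)\le 0\}$, derives the transport equation $[\ln h]_t+u[\ln h]_x+u_x=0$, and runs an $H^1$ energy estimate, $\frac{d}{dt}\|\ln h\|_{H^1}\le\|u_x\|_{L^\infty}\|\ln h\|_{H^1}+2\|u_x\|_{H^1}$, so that $\ln h$ stays bounded in $H^1\subset L^\infty$ and hence $h$ is pinched between two positive constants up to time $T_0$. You instead integrate the same mass equation along characteristics, obtaining the explicit formula $h(t,X(t;x_0))=h_0(x_0)\exp\bigl(-\int_0^t u_x(s,X(s;x_0))\,ds\bigr)$ and the two-sided bound $e^{-MT_0}\underline h\le h\le e^{MT_0}\overline H$ with $M=\sup_{[0,T_0]}\|u_x(t)\|_{L^\infty}$. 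The underlying mechanism is identical in both arguments (the logarithmic derivative of $h$ along the flow equals $-u_x$, which is controlled by the $C([0,T_0];H^2)$ regularity of $u$), but your Lagrangian version is pointwise, yields explicit constants, and sidesteps a small awkwardness in the paper's proof, namely that $\ln h$ itself does not decay at infinity unless $\bar h=1$ (one should really estimate $\ln(h/\bar h)$). The price you pay is the need to justify the characteristic flow and the chain rule, which you do correctly via $H^2\hookrightarrow W^{1,\infty}$ and $h\in C^1([0,T_0];H^1)\cap C([0,T_0];H^2)$. Both proofs are complete and acceptable.
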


\begin{proof}
	
    From the single equation
	\begin{equation}\label{sat1}
	h_t+(hu)_x=0,
	\end{equation}
	\noindent multiplying by sgn$h$ we infer that
	\begin{equation}\label{sat2}
	|h|_t+(|h|u)_x=0.
	\end{equation}
	\noindent Therefore $\int_\R (|h(t,x)|-h(t,x))dx=\int_\R (|h_0(x)|-h_0(x))dx=0.$
	Introduce now
	$$ T=\inf\{ t\in (0,+\infty); \exists \; x ; h(t,x)\leq 0 \}. $$
	
	\noindent We prove below that $T\geq T_0$.
	Since $h_0>0$, then for $t\in (0,T)$ the
	function $h$ is positive. Introduce
	then $\ln h$. If $T<T_0$ then $\ln h$
	blows up in $L^\infty$.
	We infer from \eqref{sat1} that for $t<\min(T,T_0)$ then
	\begin{equation}\label{sat3}
	[\ln(h)]_t+u[\ln(h)]_x+u_x=0.
	\end{equation}
	\noindent Multiply then by $\ln(h)-\ln(h)_{xx}$ and integrate
	by parts to have
	\begin{multline}\label{sat4}
	\frac{d}{dt}||\ln(h)||^2_{H^1}=
	\int_R u_x(\ln(h)^2-(\ln(h)_x)^2)dx   \\
	- 2\int_\R u_{xx} \ln(h)_x dx
	- 2\int_\R  u_{x} \ln(h) dx.
	\end{multline}
	
	\noindent Therefore
	\begin{equation}\label{sat5}
	\frac{d}{dt}||\ln(h)||_{H^1}\leq  ||u_x||_{L^\infty}||\ln(h)||_{H^1}
	+2||u_x||_{H^1}.
	\end{equation}

	\noindent Integrating in time
	leads to a $H^1$-bound on $\ln h$, and then $\ln h$
	remains bounded in $L^\infty$.
	Then $T\geq T_0$.
\end{proof}

The last statement is about the link between regularity of $h-\bar h$ and $\sqrt{h}-\sqrt{\bar h}$. Indeed, the following proposition ensures the $H^2(\R)$ regularity is shared by both variables since we assumed that $h \geq \underline{h} > 0$.

\begin{prop} If $0 < \alpha \leq h \leq \frac{1}{\alpha}$. Then
	$h-\bar h$ belongs to $L^2(\R)$ if and only if $\sqrt{h}-\sqrt{\bar h}$ belongs to $L^2(\R)$.
	If $0 < \alpha \leq h$, then $h-\bar h$ belongs to $H^m(\R)$ if and only if $\sqrt{h}-\sqrt{\bar h}$ belongs to $H^m(\R)$
	for $m=1,2$.
\end{prop}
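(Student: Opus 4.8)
The plan is to reduce everything to the algebraic identities
$$h-\bar h=(\sqrt h-\sqrt{\bar h})(\sqrt h+\sqrt{\bar h}),\qquad \partial_x\sqrt h=\frac{h_x}{2\sqrt h},\qquad h_x=2\sqrt h\,\partial_x\sqrt h,$$
together with the second-order formulas $\partial_x^2\sqrt h=\frac{h_{xx}}{2\sqrt h}-\frac{h_x^2}{4h^{3/2}}$ and $h_{xx}=2w\,w_{xx}+2w_x^2$, where I write $w:=\sqrt h$ and $\bar w:=\sqrt{\bar h}$; each term on the right-hand sides will then be estimated in $L^2(\R)$ using the pointwise lower bound $h\ge\alpha>0$ and, where needed, an upper bound on $h$.

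For the $L^2$ statement I would argue as follows. From $w-\bar w=(h-\bar h)/(w+\bar w)$ and $w+\bar w\ge\bar w>0$ one gets $|w-\bar w|\le\bar w^{-1}|h-\bar h|$, so $h-\bar h\in L^2(\R)\Rightarrow w-\bar w\in L^2(\R)$ using only $h\ge\alpha$. Conversely, when $h\le1/\alpha$ one has $w+\bar w\le\alpha^{-1/2}+\bar w$, hence $|h-\bar h|\le(\alpha^{-1/2}+\bar w)\,|w-\bar w|$ and $w-\bar w\in L^2(\R)\Rightarrow h-\bar h\in L^2(\R)$; the upper bound on $h$ is genuinely used here.

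For $m=1,2$ the key observation I will use is that the upper bound on $h$, an extra hypothesis at the $L^2$ level, becomes automatic: whichever of $h-\bar h$ or $w-\bar w$ is assumed to lie in $H^1(\R)$, it lies in $L^\infty(\R)$ by the one-dimensional Sobolev embedding $H^1(\R)\hookrightarrow L^\infty(\R)$, and with $h\ge\alpha$ this yields a constant $M$ with $0<\alpha\le h\le M$, equivalently $\sqrt\alpha\le w\le\sqrt M$. So the two-sided bound holds in both directions, and the $L^2$ equivalence above applies to $h-\bar h$ and $w-\bar w$ themselves. For first derivatives, $|w_x|=|h_x|/(2\sqrt h)\le(2\sqrt\alpha)^{-1}|h_x|$ and $|h_x|=2\sqrt h\,|w_x|\le2\sqrt M\,|w_x|$ give $h_x\in L^2(\R)\iff w_x\in L^2(\R)$, which settles $m=1$. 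For $m=2$, the linear terms $h_{xx}/(2\sqrt h)$ and $w\,w_{xx}$ are controlled in $L^2(\R)$ exactly as above, while the quadratic terms $h_x^2/(4h^{3/2})$ and $w_x^2$ are controlled in $L^2(\R)$ because, once $h-\bar h$ (resp.\ $w-\bar w$) is in $H^2(\R)$, its first derivative lies in $H^1(\R)\hookrightarrow L^\infty(\R)\cap L^2(\R)\subset L^4(\R)$, so its square is in $L^2(\R)$. Combining the pieces yields $h_{xx}\in L^2(\R)\iff w_{xx}\in L^2(\R)$, hence the $m=2$ equivalence.

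I expect no serious obstacle; the proposition is elementary. The only two points deserving care will be: (i) the upper bound on $h$ needed to pass from $\sqrt h-\sqrt{\bar h}$ back to $h-\bar h$ should be extracted from the Sobolev embedding rather than assumed; and (ii) the genuinely nonlinear terms $w_x^2$ and $h_x^2h^{-3/2}$ arising at second order must be handled via $H^1(\R)\hookrightarrow L^4(\R)$, no sharper estimate being needed. One should also note in passing that the chain rule is applied only on the region where $h$ is bounded away from $0$ — which is all of $\R$ by hypothesis — so that $s\mapsto\sqrt s$ is smooth on the relevant range and the identities above are legitimate.
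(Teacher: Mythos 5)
Your proof is correct, and for $m=0$ and $m=1$ it coincides with the paper's argument: the same factorization $h-\bar h=(\sqrt h-\sqrt{\bar h})(\sqrt h+\sqrt{\bar h})$, the same use of $H^1(\R)\hookrightarrow L^\infty(\R)$ to upgrade the lower bound $h\ge\alpha$ to a two-sided bound, and the same identity $(\sqrt h)_x=h_x/(2\sqrt h)$. The genuine difference is in the converse direction at $m=2$. You use the symmetric identity $h_{xx}=2ww_{xx}+2w_x^2$ with $w=\sqrt h$ and estimate the two terms directly, controlling the quadratic term via $w_x\in H^1(\R)\hookrightarrow L^4(\R)$; this mirrors your treatment of the forward direction and is arguably the more transparent route. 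The paper instead works only with the identity $(\sqrt h)_{xx}=\frac{h_{xx}}{2\sqrt h}-\frac{h_x^2}{4h^{3/2}}$, deduces that $h_{xx}-\frac{h_x^2}{2h}\in L^2(\R)$, then expands $\bigl\Vert h_{xx}-\frac{h_x^2}{2h}\bigr\Vert_{L^2}^2$ and integrates the cross term by parts, $-\int_\R h_{xx}h_x^2/h\,dx=-\frac13\int_\R h_x^4/h^2\,dx$, to obtain the explicit identity $\Vert h_{xx}\Vert_{L^2}^2=\bigl\Vert h_{xx}-\frac{h_x^2}{2h}\bigr\Vert_{L^2}^2+\frac1{12}\int_\R h_x^4/h^2\,dx$ (with the quartic term controlled, as in your argument, by $H^1(\R)\hookrightarrow L^4(\R)$ applied to $h_x/\sqrt h$). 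The paper's computation buys a clean quantitative inequality with a favorable sign; your version avoids the integration by parts entirely and requires no sign miracle, at the cost of introducing the second identity for $h_{xx}$ in terms of $w$. Both are complete proofs, and the two points you flag for care (extracting the upper bound on $h$ from the Sobolev embedding, and handling the quadratic terms via $L^4$) are exactly the points the paper relies on.
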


\begin{proof}
	We start by the case $m=0$. By writing $h-\bar h=(\sqrt{h}-\sqrt{\bar h})(\sqrt{ h}+\sqrt{\bar h})$, we have in one hand $$|\sqrt{h}-\sqrt{\bar h}| \leq \frac{|h-\bar h|}{\sqrt{\bar h}},$$ and in an other hand $$|h-\bar h| \leq |\sqrt{h}-\sqrt{\bar h}|(\sqrt{\bar h}+||\sqrt{h}||_{L^\infty(\R)}),$$ which proves the equivalence.
	
	\noindent For the case $m=1$ we write
	\begin{equation}\label{lille}
	(\sqrt{h})_{x} = \frac { h_x }{2\sqrt{h}}.
	\end{equation}
	Since $h$ or $\sqrt h$ belongs to $H^1(\R)\subset L^\infty(\R)$
	we have that $h$ is bounded in $L^\infty(\R)$. The {the} case $m=0$ applies.
	Then $\sqrt{h}$ is bounded from below and from above and
	we infer from \eqref{lille} that $h_x$ in $L^2(\R)$
	and $(\sqrt h)_x$ in $L^2(\R)$ are equivalent.
	
	\noindent If $m=2$, then we use
	\begin{equation}\label{d2}
	(\sqrt{h})_{xx} = \frac{h_{xx}}{2\sqrt{h}} - \frac{h_x^2}{4h^{3/2}}.
	\end{equation}
	
	Since $h$ is bounded from below and from above we have that
	$ (\sqrt h)_{xx}$ belongs to $L^2(\R)$ if and only if
	$h_{xx}-\frac{(h_x)^2}{2h}$ belongs to $L^2(\R)$.
	Due to the embedding $H^1(\R)\subset L^4(\R)$
	we then have that $h \in H^2(\R)$ implies that
	$h_{xx}-\frac{(h_x)^2}{2h}$ belongs to $L^2(\R)$.
	Conversely we have that
	\begin{equation}\label{lille2}
	\Big\Vert h_{xx} - \frac{(h_x)^2}{2h} \Big\Vert^2_{L^2} = || h_{xx} ||^2_{L^2} - \int_\R \frac{h_{xx} (h_x)^2}{h} dx + \int_\R \frac{(h_x)^4}{4h^2} dx.
	\end{equation}

	\noindent Integrating by parts $$-\int_\R \frac{h_{xx}(h_x)^2}{h}dx=-\frac13 \int_\R \frac{(h_x)^4}{h^2}dx.$$
	Appealing the case $m=1$ we have that $(\sqrt h)_x $ in $H^1(\R)$ then $\frac{(h_x)}{\sqrt h}$ in $L^4(\R)$.
	Therefore $$ ||h_{xx}||^2_{L^2(\R)}\leq \Big\Vert h_{xx}-\frac{(h_x)^2}{2h} \Big\Vert^2_{L^2}+\frac{1}{12} \int_\R \frac{(h_x)^4}{h^2}dx,$$
	and the proof is completed.
	
\end{proof}

 \section{Conclusion}

In this work, we have studied the initial value problem for RSW equations. To simplify the computations, we have started by symmetrizing the system. We have used a non classical change of variable instead of the standard symmetrization methods. We had to check that $h$ remains positive in return. The link between initial and symmetric systems has been established in the last section. Then we have proved two main results.

On the first hand, we have obtained local existence and uniqueness of a solution for the symmetric system. First step was to use a fix-point theorem to prove this result for the regularized system. Then we established {\it a priori} estimates using the symmetric structure of matrix $S$, before passing to the limit.

On an other hand we have also detailed a result of global existence for the regularized system with initial data close to the rest solution. As we said before, such result can not pass to the limit because shocks can appear in finite time.

Weak solutions can take into account shocks, and possibly exist for any time. This question is well understood for scalar conservation laws (see \cite{GRT1}), but less is known about non linear hyperbolic systems.

A possible development to this work is to adapt the results we present here to prove local existence and uniqueness of a solution for the 2D RSW system.


\begin{thebibliography}{00}

\bibitem{Hydrostatic}
\newblock E. Audusse, F. Bouchut, M-O. Bristeau, R. Klein, B. Perthame
\newblock \emph{A fast and stable well-balanced scheme with hydrostatic reconstruction for shallow water flows},
\newblock Journal on Scientific Computing, 2004.

\bibitem{FirstRSW}
\newblock F. Bouchut, J. Le Sommer, V. Zetilin
\newblock \emph{Frontal geostrophic adjustment and nonlinear wave phenomena in one-dimensional rotating shallow water. Part 2. High-resolution numerical simulations. }
\newblock Journal of Fluid Mechanic; 514, 2004.

\bibitem{brezis}
\newblock H. Brezis
\newblock \emph{Functional analysis, Sobolev spaces and partial differential equations. }
\newblock Universitext. Springer, New York, 2011.

\bibitem{GRT1}
\newblock E. Godlewski, P-A. Raviart
\newblock \emph{Hyperbolic systems of conservation laws.}
\newblock{Math\'{e}matiques \& Applications, Ellipses Paris, 1991.}


\bibitem{GRT2}
\newblock E. Godlewski, P-A. Raviart
\newblock \emph{Numerical approximation of hyperbolic system of conservation laws.}
\newblock{Applied Mathematical Sciences, 118. Springer-Verlag, New-York. }

\bibitem{Kreiss}
\newblock H-O. Kreiss and J. Lorentz,
\newblock \emph{Initial Boundary Value Problems and the Navier-Stokes Equations}
\newblock Pure and Applied Mathematics; 136; 1989

\bibitem{Chertock}
\newblock X. Liu, A. Chertock, A. Kurganov
\newblock \emph{An asymptotic preserving scheme for the two-dimensional shallow water equations with Coriolis forces.}
\newblock Journal of Computational Physics; 391, 2019.

\bibitem{Majda}
\newblock A. Majda,
\newblock \emph{Compressible fluid flow and systems of conservation laws in several space variables},
\newblock Applied Mathematical Sciences, 53. Springer-Verlag, New York, 1984.

\bibitem{strauss}
\newblock W. Strauss,
\newblock \emph{On the continuity of functions with values in various Banach spaces},
\newblock Pacific J. Math, 19, 3, 543-555, 1966.

\bibitem{LivreZeitlin}
\newblock V. Zeitlin
\newblock \emph{Geophysical Fluid Dynamics. Understanding (almost) everything with Rotating Shallow Water models.}
\newblock Oxford University Press, 2018.

\end{thebibliography}
\end{document}